\newcommand{\bburl}[1]{\textcolor{blue}{\url{#1}}}
\newcommand{\monthyear}[1]{%
  \def\@monthyear{\uppercase{#1}}}
\newcommand{\volnumber}[1]{%
  \def\@volnumber{\uppercase{#1}}}
\def\ps@plain{\ps@empty
  \def\@oddfoot{\@monthyear \hfil \thepage}%
  \def\@evenfoot{\thepage \hfil \@volnumber}}
\def\ps@firstpage{\ps@plain}
\def\ps@headings{\ps@empty
  \def\@evenhead{%
    \setTrue{runhead}%
    \def\thanks{\protect\thanks@warning}%
    %%%%%%%%%%%%%%%%%%%%%%%%%%%%%%%%%%%%%%%%%%%%%%%%%%%%%%%%%%%Restore this later%%%%%%%%%%%%%%%%%%%%%%%%%%%%%%%%%
    %\uppercase{The Fibonacci Quarterly}\hfil}%
    \uppercase{\ }\hfil}%
    %%%%%%%%%%%%%%%%%%%%%%%%%%%%%%%%%%%%%%%%%%%%%%%%%%%%%%%%%%%Restore this later%%%%%%%%%%%%%%%%%%%%%%%%%%%%%%%%%
  \def\@oddhead{%
    \setTrue{runhead}%
    \def\thanks{\protect\thanks@warning}%
    \hfill\uppercase{Zeckendorf's Theorem Using Indices in an Arithmetic Progression}}%
    %\hfill\uppercase{Gaussian Behavior in Zeckendorf Decompositions Arising From Two-Dimensional Lattices}}
  \let\@mkboth\markboth
  \def\@evenfoot{%
    \thepage \hfil \@volnumber}%
  \def\@oddfoot{%
    \@monthyear \hfil \thepage}%
  }%
\theoremstyle{plain}
\numberwithin{equation}{section}
\newtheorem{thm}{Theorem}[section]
\newtheorem{lemma}[thm]{Lemma}
\newtheorem{remark}[thm]{Remark}
\newtheorem{example}[thm]{Example}
\newtheorem{definition}[thm]{Definition}
\newcommand{\ignore}[1]{}
\newcommand\be{\begin{eqnarray}}
\newcommand\ee{\end{eqnarray}}
\newcommand\bea{\begin{eqnarray}}
\newcommand\eea{\end{eqnarray}}
\newcommand\ben{\begin{enumerate}}
\newcommand\een{\end{enumerate}}
\begin{document}
%%
% Things to do:
%\begin{itemize}
%    \item Title
%    \item Abstract\\
%    state Zeckendorf's thm\\
%    we consider decompositions of the naturals using subsets of the Fibonacci sequence built from every nth Fibonacci number (n-gap fibs).\\
%    \item Introduction/Lit Review\\
%    what is fib sequence?\\
%    what is Zeck decomp?\\
%    what is PLRS?\\
%    what are n-gap fibs?\\

%    \item Results\\
%    \begin{itemize}
%        \item define our 3-gap recurrence: $F_{3n+1}=4\cdot F_{3(n-1)+1}+F_{3(n-2)+1}$, prove that it is a PLRS and thus we can decompose naturals uniquely using it
%        \item Show examples of decompositions
%        \item Look at if we exclude or exception, how the number of decompositions increase
%        \item define n-gap fibs using the recurrence we found for them: $F_{nk+1}=a\cdot F_{n(k-1)+1}+(-1)^{n-1}F_{n(k-2)+1}$, and $a=\phi^n$ rounded to nearest integer
 %       \item n-gap fibs are PLRSs, thus we can build unique decompositions of the naturals using them
  %  \end{itemize}
%    \\
 %  \\
  %  \\
 %   \\

  %  \item Conclusion\\
 %   next steps, questions to explore from here\\

 %   \item Discussion
 %   \item References
%\end{itemize}
%%

\monthyear{TBD}
\volnumber{Volume, Number}
\setcounter{page}{1}

\title{Zeckendorf's Theorem Using Indices in an Arithmetic Progression}

\author{Amelia Gilson, Hadley Killen, Tamás Lengyel, Steven J. Miller, \\ Nadia Razek, Joshua M. Siktar, and Liza Sulkin \\ }

\address{Department of
  Mathematical Sciences, Carnegie Mellon University, Pittsburgh, PA 15213}

\address{\tiny{Department of Mathematical Sciences, Carnegie Mellon University, Pittsburgh, PA 15213}} \email{ameliasg97@gmail.com}

\address{\tiny{Department of Mathematical Sciences, Carnegie Mellon University, Pittsburgh, PA 15213}} \email{hadley.killen@gmail.com}

\address{\tiny{Department of Mathematics, Occidental College, 1600 Campus Road, Los Angeles, CA 90041}} \email{lengyel@oxy.edu}

\address{\tiny{Department of Mathematical Sciences, Carnegie Mellon University, Pittsburgh, PA 15213, and Department of Mathematics and Statistics, Williams College, Williamstown, MA 01267}} \email{sjm1@williams.edu}

\address{\tiny{Department of Mathematical Sciences, Carnegie Mellon University, Pittsburgh, PA 15213}} \email{nadia.razek97@gmail.com}

\address{\tiny{Department of Mathematics, The University of Tennessee-Knoxville, Knoxville, TN 37916}} \email{jsiktar@vols.utk.edu}

\address{\tiny{Department of Mathematical Sciences, Carnegie Mellon University, Pittsburgh, PA 15213}} \email{liza.sulkin@gmail.com}

\thanks{This work was supported in part by NSF Grant DMS1561945. We thank the referee for a careful reading.}

\begin{abstract}
Zeckendorf's Theorem states that any positive integer can be uniquely decomposed into a sum of distinct, non-adjacent Fibonacci numbers. There are many generalizations, including results on existence of decompositions using only even indexed Fibonacci numbers. We extend these further and prove that similar results hold when only using indices in a given arithmetic progression. As part of our proofs, we generate a range of new recurrences for the Fibonacci numbers that are of interest in their own right.
\end{abstract}

\maketitle
\tableofcontents

%%%%%%%%%%%%%%%%%%%%%%%%%%%%%%%%%%%%%%%%%%%%%%%%%%%%%%%%%%%%%%%%%%%%%%%%%%%%%%%%%%%%%%%%%%%%%%%%%%%%%%%%%%%%%%%%%%%%%%%%%%%%%%%%%%%%%%%%%%%%%%%%%%%%%%
%%%%%%%%%%%%%%%%%%%%%%%%%%%%%%%%%%%%%%%%%%%%%%%%%%%%%%%%%%%%%%%%%%%%%%%%%%%%%%%%%%%%%%%%%%%%%%%%%%%%%%%%%%%%%%%%%%%%%%%%%%%%%%%%%%%%%%%%%%%%%%%%%%%%%%
%%%%%%%%%%%%%%%%%%%%%%%%%%%%%%%%%%%%%%%%%%%%%%%%%%%%%%%%%%%%%%%%%%%%%%%%%%%%%%%%%%%%%%%%%%%%%%%%%%%%%%%%%%%%%%%%%%%%%%%%%%%%%%%%%%%%%%%%%%%%%%%%%%%%%%
\section{Introduction}
The Fibonacci sequence is defined via the recurrence relation
\be\label{FibDef}
F_n \ = \ F_{n-1}+F_{n-2}
\ee
for $n\geq 2$, where we need two initial conditions; often these are $F_0=0$ and $F_1=1$. We can use Binet's Formula to jump to the $n$\textsuperscript{th} term:
\be \label{BinetFormula}
F_n \ = \ \frac{1}{\sqrt{5}}\left(\left(\frac{1 + \sqrt{5}}{2}\right)^n - \left(\frac{1 - \sqrt{5}}{2}\right)^n\right) \ = \ \frac{\phi^n-(-\phi)^{-n}}{\sqrt{5}},
\ee where $\phi$ is the Golden ratio $\frac{1 + \sqrt{5}}{2}$.

There are many interesting properties of the Fibonacci numbers; see for example \cite{Kos}. We focus on Zeckendorf's Theorem; it turns out that if we change the initial conditions, the Fibonacci numbers are equivalent to a decomposition property of the integers.\footnote{If we began with $F_0 = 0, F_1 = 1$ then $F_2 = 1$ and we lose uniqueness of decomposition, both because we can add an $F_0$ as well as we have two ways to represent $1$.}

\begin{thm}\label{ZeckThm}(Zeckendorf's Theorem)
\label{zeck} Consider the Fibonacci recurrence with initial conditions $F_0 = 1, F_1 = 2$. Any positive integer $n$ can be expressed uniquely as a sum of non-adjacent Fibonacci numbers: \be\label{zeckDecomp} N\ = \ \sum^{\infty}_{k = 0} b_kF_k, \ \ {\rm where} \ \ b_k \in \{0,1\} \ {\rm and} \ b_k\cdot b_{k+1}\ =\ 0. \ee Further, the Fibonacci numbers are the unique sequence of positive numbers such that every integer can be expressed uniquely as a sum of non-adjacent terms. Note we could also choose initial conditions $F_0 = 0, F_1 = 1$ if we only use indices $k \ge 2$.
\end{thm}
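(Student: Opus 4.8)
The plan is to handle the theorem in three stages: existence of a non-adjacent (Zeckendorf) decomposition, then its uniqueness, and finally the converse characterization, which I expect to be the crux. Throughout I would work with the distinct usable values $F_2 = 1, F_3 = 2, F_4 = 3, F_5 = 5, \dots$, since with $F_0 = 0, F_1 = F_2 = 1$ the terms $F_0$ and one copy of $1$ are redundant (as the footnote observes).

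For existence I would argue by strong induction on $N$ via the greedy algorithm: let $F_k$ be the largest Fibonacci number with $F_k \le N$. Since $F_{k+1} > N$, the remainder satisfies $N - F_k < F_{k+1} - F_k = F_{k-1}$. Hence $N - F_k$ is smaller than $N$, so by induction it has a Zeckendorf decomposition, and that decomposition uses only indices $\le k-2$ because its value lies below $F_{k-1}$. Appending $F_k$ therefore keeps all chosen indices non-adjacent, giving a valid decomposition of $N$.

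For uniqueness the key is the bound that any legal (non-adjacent) sum whose largest summand is $F_k$ is strictly less than $F_{k+1}$; this follows from the standard identities $F_2 + F_4 + \cdots + F_{2m} = F_{2m+1} - 1$ and $F_1 + F_3 + \cdots + F_{2m-1} = F_{2m} - 1$, which control the largest possible legal sum with top index $k$. Given two distinct legal decompositions of $N$, I would cancel the terms they share, isolate the largest index $j$ at which they differ, and assume one decomposition uses $F_j$ while the other does not. After cancellation the two remaining sums are equal (each is $N$ minus the shared part); the side omitting $F_j$ uses only indices $\le j-1$, so by the bound its value is $< F_j$, while the side containing $F_j$ has value $\ge F_j$, a contradiction. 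Non-adjacency guarantees that the restriction to indices $\le j$ is itself legal, so the bound indeed applies.

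The converse is the main obstacle. Given an increasing sequence $a_1 < a_2 < \cdots$ of positive integers for which every positive integer has a unique non-adjacent representation, I would prove by induction the joint statement that $a_n = F_{n+1}$ and that the set of integers representable using $a_1, \dots, a_n$ is exactly $\{0, 1, \dots, F_{n+2} - 1\}$, each uniquely. The base cases $a_1 = 1$ and $a_2 = 2$, together with distinctness of the terms, come from the facts that $1$ and $2$ must be representable and that equal terms or a gap in the values would force either a missing integer or a doubly-represented one. For the inductive step, let $M$ be the smallest positive integer not representable by $a_1, \dots, a_n$; by hypothesis $M = F_{n+2}$. I would then pin down $a_{n+1}$ by a two-sided squeeze: it cannot be smaller than $M$, for then $a_{n+1}$ would be representable both as a singleton and, being $< M$, by lower-indexed terms, violating uniqueness; and it cannot exceed $M$, for $M$ must be representable yet no combination of $a_1, \dots, a_n$ reaches it, so a term $a_j$ with $j \ge n+1$, hence of size $\ge a_{n+1}$, must appear, giving $M \ge a_{n+1}$. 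Thus $a_{n+1} = M = F_{n+2}$. Finally I would verify the invariant propagates: adjoining $a_{n+1}$ contributes exactly the representations $a_{n+1} + (\text{legal sums of } a_1, \dots, a_{n-1})$, which by induction fill $a_{n+1} + \{0, \dots, F_{n+1} - 1\} = \{F_{n+2}, \dots, F_{n+3} - 1\}$, extending the representable range to $\{0, \dots, F_{n+3} - 1\}$ with no collisions, since every new value exceeds all old ones. The delicate points are justifying at the outset that the sequence may be taken strictly increasing and integer-valued, and keeping the ``exact initial segment, uniquely represented'' invariant airtight across the squeeze.
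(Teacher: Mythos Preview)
The paper does not actually prove Theorem~\ref{ZeckThm}; it is stated as background, with the remark that ``the classical proof is by induction on $N$'' and citations to \cite{Br, KKMW, Len1, Ost, Ze} for full arguments. Your proposal follows exactly that classical inductive route---greedy algorithm for existence, the $F_{k+1}-1$ bound on legal sums for uniqueness, and a forced-value squeeze for the converse---so there is nothing in the paper itself to compare against beyond confirming that your overall strategy matches what the authors have in mind when they say ``induction on $N$.''

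Your sketch is sound for existence and uniqueness. For the converse, the squeeze argument and the propagated invariant are the right ideas, and your flagged ``delicate points'' are genuine: the theorem as stated says ``positive numbers,'' not ``positive integers,'' so before your induction starts you do need to argue that each $a_i$ is forced to be an integer (e.g., because every positive integer must be hit and any non-integer term would leave some integer unreachable or force a non-integer sum to equal an integer only via cancellation that the non-adjacency rule prevents), and that the terms may be taken distinct and hence strictly ordered (equal terms would immediately violate uniqueness of the representation of that common value). Once those are nailed down, your induction goes through.
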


The classical proof is by induction on $N$, but other proofs have been developed as well; see \cite{Br, KKMW, Len1, Ost, Ze}. There is an extensive literature on generalizations and variations of Theorem \ref{ZeckThm}; see for instance \cite{Al, BM, Br, CHHMPV, DDKMMV, Fr, Ho, Ke, Luo, ML, MW1, MW2}. Zeckendorf decompositions have also been studied in a combinatorial framework in numerous places, including \cite{BCCFLMX, CCGJMSY, FJLLLMSS, KKMW, Len1}. The combinatorial approach initiated in \cite{KKMW} is very useful for studying related problems, such as the distribution of the number of summands and the gaps between them in decompositions.

%One particularly noteworthy idea is that of the \textbf{far-difference representation}, which allows for some terms in the decomposition to have negative sign as a cost for having the summands more spread out than in Theorem \ref{zeck}.

%\begin{definition} (Far-Difference Representations) \label{fardif} A far-difference representation of a positive integer $x$ by a sequence $\{a_n\}$ is a signed sum of terms from the sequence which equals $x$.
%\end{definition}

%One major result in Far-Difference Representations gives such a representation in terms of Fibonacci numbers.

%\begin{theorem} \label{fardifthm}
%Every $x \in \mathbb{Z}$ has a unique Fibonacci far-difference representation such that every two terms of the same sign differ in index by at least $4$ and every two terms of opposite sign differ in index by at least $3$.
%\end{theorem}

%This definition and theorem are explored in great depth with regards to uniqueness of decompositions and the number of summands in \cite{DDKMV}.

%One is that it is similarly possible to uniquely decompose any positive integer into all even or all odd indexed Fibonacci numbers using the coefficients $\{0, 1, 2\}$ \cite{CG1, CG2}. There are conditions on which indices can occur once and which can occur twice.

%\begin{corollary}
%\label{zeckindex}
%Let $n$ be any positive integer.  Then $n$ can be expressed uniquely as a sum $n=\sum_k a_kF_{c_k}$, where $a_k\in\{0,1,2\}$ and $c_k=2k$ or $c_k=2k+1$.
%\end{corollary}

%KKMW gives definition of PLRS put either here or in sec 2
Previous work derived decomposition results when we can only use Fibonacci numbers whose indices have the same parity. For example, there is the \emph{even Fibonacci
representation of $N$} (see \cite{CG1, CG2}): every positive integer has a unique decomposition of the form \be\label{evenDecomp} \sum^{\infty}_{k = 1} b_k F_{2k} \ \ \ {\rm with} \ \ \ b_k \in \{0, 1, 2\} \ \ \ {\rm and\ if\ } b_i = b_j = 2 \ {\rm then}\ \exists k \ {\rm with\ } i < k < j \ {\rm and} \ b_k = 0, \ee
where we use the initial conditions $F_2 = 1, F_3 = 2$ to ensure that the decompositions are unique\footnote{If we were to allow $k = 0,$ then the $F_0 = 0$ term from the Fibonacci Sequence would be allowed in our decompositions and we would lose the uniqueness property of Zeckendorf Decompositions.}. The Fibonacci recurrence decomposes a summand into two terms: one whose index has the same parity as the original summand, and one whose index has the opposite parity. Thus the existence of decomposition \eqref{evenDecomp} is to be expected.

\begin{example} As an example, here are the Zeckendorf and even Fibonacci representations of $83$ respectively:
\begin{align}\label{zeckdecompEx1}
83 \ &= \ 55 + 21 + 5 + 2 \ = \ F_{10} + F_{8} + F_5 + F_3 \nonumber\\
83 \ &= \ 1 \cdot 55 + 1 \cdot 21 + 2 \cdot 3 + 1 \cdot 1 \ = \ 1 \cdot F_{10} + 1 \cdot F_8 + 0 \cdot F_6 + 2 \cdot F_4 + 1 \cdot F_2.
\end{align}
As stated in Theorem \ref{ZeckThm}, the initial conditions for Zeckendorf Decompositions are $F_0 = 0, F_1 = 1$. On the other hand, the even Fibonacci Representation \eqref{evenDecomp} uses the initial conditions $F_2 = 1$ and $F_3 = 2$ to maintain uniqueness of decompositions. For precisely this reason, unlike the decomposition \eqref{zeckDecomp}, we begin summing terms in \eqref{evenDecomp} when $k = 1.$
\end{example}

Given the decomposition \eqref{evenDecomp} result, it is natural to ask whether other subsequences of the Fibonacci numbers also yield unique decompositions, and if so what they are. We prove there are such decompositions when we restrict our indices to be in an arithmetic progression. Before stating our results we first establish some notation.
%\begin{definition} ($n$-gap subsequence)
%\label{n-gap seq}
%Let $\{G_k\}_{k=1}^{\infty}$ be an integer sequence and $n\in \mathbb{Z}^+$. The $n$-gap subsequence of $\{G_k\}$ is $\{G_{nk}\}_{k=1}^{\infty}$.
%\end{definition}

\begin{definition}($n$-gap Fibonacci numbers)\label{fib n-gap seq}
For $n, m \in \mathbb{N}^+$ with $0 \le m < n+1$, let
$\mathcal{F}(k;n,m) = F_{k(n+1)+m}$ equal the Fibonacci numbers whose indices are congruent to $m$ modulo $n+1$. We call $m$ the offset, and call $\mathcal{F}(k;n,m)$ an $n$-gap subsequence. Note the Fibonacci numbers are a $0$-gap subsequence, and the even and odd index results concern 1-gap subsequences.
\end{definition}

As we will see, the construction of $n$-gap Fibonacci subsequences is based on the theory of \textbf{Positive Linear Recurrence Sequences} (PLRS).

\begin{definition}\label{PLRSdef} (PLRS)
A Positive Linear Recurrence Sequence is a sequence of integers $\{H_n\}_{n=1}^{\infty}$ with the following properties.
\begin{enumerate}
    \item There are non-negative integers $L, c_1,\dots,c_L$ such that
    \be \label{PLRSCond1}
    H_{n} \ = \ c_1H_{n-1} + c_2H_{n-2} + \cdots + c_LH_{n-L}
    \ee
    where $L, c_1, c_L>0$.
    \item $H_1 = 1$ and for $1\leq n<L$, we have
    \be \label{PLRSCond2}
    H_{n} \ = \ c_1H_{n-1} + c_2H_{n-2} + \cdots + c_{n-1}H_1+1.
    \ee
\end{enumerate}
\end{definition}

The study of PLRS is foundational in many papers relating to Zeckendorf Decompositions; see for instance \cite{BCCFLMX, BM, CFHMN, CFHMNPX, DFFHMPP, Ha, ML}. There is an extensive literature on when there is a unique decomposition arising from a given recurrence relation, as well as a host of other properties (such as the distribution of the number of summands in a decomposition, gaps between summands, and digital expansions of these sequences). In particular, if the recurrence relation is a PLRS, then Miller and Wang \cite{MW1, MW2} proved that there exists a unique legal decomposition; for more on these sequences see \cite{BM, Br, Day, DG, Fr, GTNP, Ha, HW, Ho, Ke, KKMW, LT, Len1, PT, Ste1, Ste2}, and for other types of decompositions see \cite{Al, CFHMN, CFHMNPX, CCGJMSY, DDKMMV, DDKMV, DFFHMPP}.

%%%%%Finally, Definition \ref{PLRSdef} is also used in the main result of the present paper.

%Using the subsequence $\{\mathcal{F}(n;k,m)\}_k$ for non-negative integer $n$, we look at whether decompositions of positive integers exist, whether they are unique, and how they relate to new recurrence relations that generate the subsequence.  We call these new subsequences the $n$-gap Fibonacci numbers. Then, taking these $n$-gap Fibonacci numbers, we generalize Zeckendorf's Theorem to other recursive integer sequences and relate these results to the features of their corresponding recurrence relations.

\begin{thm} ($n$-gap Fibonaccis as PLRS)
\label{n-gap PLRSState}
If $n = 2$ or $n \geq 3$ is odd, then the $n$-gap Fibonacci sequence $\{\mathcal{F}(k;n,m)\}^{\infty}_{k = 1}$ is a PLRS for any $0 \leq m < n + 1$. If $n = 2$ then there is a unique decomposition of every positive integer taking the form \eqref{evenDecomp}, with initial conditions $F_2 = 1, F_3 = 2$. On the other hand, if $n \geq 3$ is odd then the decomposition is still unique for every positive integer, but it takes the form
\be\label{nFibDecomp} \sum^{\infty}_{k = 0} b_k F_{k(n + 1) + m} \ \ \ {\rm with} \ \ \ |b_{k}| \ \leq \ a_{n} \ \ \ {\rm for} \ {\rm all} \ \ \ k \ \geq \ 1, \ee
again with initial conditions $F_2 = 1, F_3 = 2$. Here $a_n$ refers to $\phi^n$ rounded to the nearest integer (one of the Lucas numbers).
\end{thm}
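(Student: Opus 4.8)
The plan is to reduce everything to a single second-order recurrence for the subsequence and then split on the parity of $n$. First I would record the Lucas-based ``jump'' identity $F_{j+g} = L_g F_j - (-1)^g F_{j-g}$, which follows in one line from Binet's formula \eqref{BinetFormula} using $\phi\psi = -1$, where $\psi = (1-\sqrt 5)/2$ is the conjugate root. Setting $g = n+1$ and $G_k := \mathcal{F}(k;n,m) = F_{k(n+1)+m}$, this yields
\be G_{k+1} \ = \ L_{n+1} G_k - (-1)^{n+1} G_{k-1}. \ee
The entire dichotomy in the theorem is dictated by the sign $(-1)^{n+1}$: when $n$ is even the recurrence reads $G_{k+1} = L_{n+1}G_k + G_{k-1}$ with strictly positive coefficients, whereas when $n$ is odd it reads $G_{k+1} = L_{n+1}G_k - G_{k-1}$, carrying a negative coefficient. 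Recording this derivation also supplies the ``new recurrences'' promised in the abstract.

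For the even case (in particular $n = 2$, where $L_3 = 4$) I would verify that $\{G_k\}$ meets Definition \ref{PLRSdef}: the leading and trailing coefficients $c_1 = L_{n+1}$ and $c_2 = 1$ are positive, and the initial-segment condition \eqref{PLRSCond2} is checked by computing the first terms from the normalization $F_2 = 1,\ F_3 = 2$, after re-indexing so that the first term equals $1$ as the definition requires. Once the PLRS axioms hold, existence and uniqueness of a legal decomposition is immediate from Miller--Wang \cite{MW1, MW2}, and it only remains to translate the abstract ``legal'' digit rule for $G_{k+1} = L_{n+1}G_k + G_{k-1}$ into the concrete shape \eqref{evenDecomp}; this is a bookkeeping step identifying the admissible digit strings.

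The odd case is the crux, and here the PLRS machinery does not apply directly: the characteristic polynomial $x^2 - L_{n+1}x + 1$ has two positive real roots $\phi^{n+1}$ and $\phi^{-(n+1)}$, so $\{G_k\}$ satisfies no nonnegative two-term recurrence, which is precisely why the decomposition must acquire \emph{signed} digits as in \eqref{nFibDecomp}. My plan is to build the representation by a balanced greedy algorithm driven by the identity $L_{n+1}G_k = G_{k+1} + G_{k-1}$: given $N$, repeatedly replace the coefficient at the top nonzero level by its balanced residue, carrying any overflow simultaneously up to level $k+1$ and down to level $k-1$. The bound $|b_k| \le a_n$ should fall out of the error analysis, since the balanced residues live in an interval of length $L_{n+1} \approx \phi^{n+1}$ and $\tfrac{1}{2}\phi^{n+1} < \phi^{n} \approx a_n$; rounding $\phi^{n}$ to the nearest Lucas number then gives the clean stated bound, while the distinguished $k=0$ term is absorbed into the base normalization.

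For uniqueness I would argue as in the classical Zeckendorf count: show that the legal balanced signed strings of a given length are in bijection with the integers in a symmetric interval determined by the $G_k$, so that no integer admits two distinct legal representations. The main obstacle I anticipate is controlling the carries in the odd case, because a single carry propagates in \emph{both} directions at once; one must prove that the greedy procedure terminates, never forces a digit outside $[-a_n, a_n]$, and that the simultaneous up-and-down adjustments cannot cascade without end. Establishing this termination-and-boundedness statement, together with the bijective count that yields uniqueness, is where essentially all the difficulty lies; by contrast the recurrence derivation and the even case are comparatively routine.
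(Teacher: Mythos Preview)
Your recurrence $G_{k+1} = L_{n+1}G_k - (-1)^{n+1}G_{k-1}$ is correct and is exactly the content of Lemma~\ref{n-gap fib recurrence}, but with one crucial indexing discrepancy: despite Definition~\ref{fib n-gap seq}, the paper silently switches in Sections~\ref{preliminaries}--\ref{decompResult} to step size $n$ rather than $n+1$ (look at the exponents $\phi^{nk+m}$ in the proof of Lemma~\ref{n-gap fib recurrence}, and at the relation $F_{nk+1}=a_nF_{n(k-1)+1}+F_{n(k-2)+1}$ in the proof of Theorem~\ref{n-gap PLRS}). Consequently the paper's ``$n\ge 3$ odd'' means \emph{odd step size}, which in your $g=n+1$ convention is your ``$n$ even.'' Once you unwind this swap, the paper's entire argument is: for odd step size the sign is $+1$, the two-term recurrence has positive coefficients, one checks Definition~\ref{PLRSdef} directly (Example~\ref{2gapEx} and Theorem~\ref{n-gap PLRS}), and then Theorem~\ref{genZeckPLRS} (Miller--Wang) supplies the unique legal decomposition, with the digit bound $0\le b_k\le a_n$ coming from the leading coefficient. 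That is exactly your ``even case'' paragraph, so on the portion of the theorem the paper actually proves, your plan and theirs coincide.

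Where you diverge is your treatment of even step size (what you call ``$n$ odd''): you correctly observe that the two-term recurrence then has a negative coefficient and cannot be a PLRS, and you sketch a balanced signed-digit greedy with bidirectional carries. But the paper does not prove this case at all; Section~\ref{conclusion} lists the even-step-size extension explicitly as an open problem. So your signed-digit programme is an attack on a genuinely harder, unproved case rather than an alternative route to the paper's result, and the $|b_k|\le a_n$ in \eqref{nFibDecomp} should be read (under the paper's step-size-$n$ convention) as the ordinary PLRS digit bound, not as an invitation to negative digits. If your goal is to match the paper, you can simply drop that paragraph; if you want to go beyond it, be aware that neither termination of your two-sided carry process nor the bijective count for uniqueness follows from anything cited here.
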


Just like the even Fibonacci decomposition, we use the initial conditions $F_2 = 1, F_3 = 2$ for the odd $n$-gap Fibonacci decomposition because we want to ensure all decompositions are unique. Here is an example of what these decompositions look like for specific values of $n$ and $m$.

\begin{example}
This is the $2$-gap decomposition of $143$ when $n = 2, m = 1$:
\be \label{143Ex,n=2,m=1}
143 \ = \ 2 \cdot 55 + 2 \cdot 13 + 2 \cdot 3 + 1 \cdot 1 \ = \ 2 \cdot F_{10} + 2 \cdot F_7 + 2 \cdot F_4 + 1 \cdot F_1.
\ee
Similarly, this is the $2$-gap decomposition of $143$ when $n = 2, m = 2$:
\be \label{143Ex,n=2,m=1=2}
143 \ = \ 1 \cdot 89 + 2 \cdot 21 + 2 \cdot 5 + 2 \cdot 1 \ = \ 1 \cdot F_{11} + 2 \cdot F_8 + 2 \cdot F_5 + 2 \cdot F_2.
\ee
We can also list $3$-gap decompositions for $143$. Here is the decomposition when $n = 3, m = 1$:
\be \label{143Ex,n=3,m=1}
143 \ = \ 4 \cdot 34 + 1 \cdot 5 + 2 \cdot 1 \ = \ 4 \cdot F_9 + 1 \cdot F_5 + 2 \cdot F_1.
\ee
Here is the decomposition when $n = 3, m = 2$:
\be \label{143Ex,n=3,m=2}
143 \ = \ 2 \cdot 55 + 4 \cdot 8 + 1 \cdot 1 \ = \ 2 \cdot F_{10} + 4 \cdot F_6 + 1 \cdot F_2.
\ee
Finally, here is the decomposition when $n = 3, m = 3$:
\be \label{143Ex,n=3,m=3}
143 \ = \ 1 \cdot 89 + 4 \cdot 13 + 1 \cdot 2 \ = \ 1 \cdot F_{11} + 4 \cdot F_7 + 1 \cdot F_3.
\ee
\end{example}

%Additionally, we cannot have $m = 0$ as the offset and still be able to guarantee existence of decompositions. The sequence
%\be\label{FibGap,n=2,m=0}
%\{\mathcal{F}(k;2,0)\}^{\infty}_{k = 1} \ = \ \{2, 8, 34, 144, 610, \dots\}
%\ee
%has only even numbers (this can also be seen by considering the entire Fibonacci sequence modulo $2$). Thus no matter what coefficients we use, no positive odd integer can be decomposed with the elements of this sequence as the only allowed summands.

In Section \ref{preliminaries} we examine some recurrences for $n$-gap Fibonacci numbers and discuss how these relate to the more general theory of PLRS. Then in Section \ref{decompResult} we prove Theorem \ref{n-gap PLRSState}. Finally, in Section \ref{conclusion} we give some concluding remarks and possible directions for future research.
%%%%%%%%%%%%%%%%%%%%%%%%%%%%%%%%%%%%%%%%%%%%%%%%%%%%%%%%%%%%%%%%%%%%%%%%%%%%%%%%%%%%%%%%%%%%%%%%%%%%%%%%%%%%%%%%%%%%%%%%%%%%%%%%%%%%%%%%%%%%%%%%%%%%%%
%%%%%%%%%%%%%%%%%%%%%%%%%%%%%%%%%%%%%%%%%%%%%%%%%%%%%%%%%%%%%%%%%%%%%%%%%%%%%%%%%%%%%%%%%%%%%%%%%%%%%%%%%%%%%%%%%%%%%%%%%%%%%%%%%%%%%%%%%%%%%%%%%%%%%%
%%%%%%%%%%%%%%%%%%%%%%%%%%%%%%%%%%%%%%%%%%%%%%%%%%%%%%%%%%%%%%%%%%%%%%%%%%%%%%%%%%%%%%%%%%%%%%%%%%%%%%%%%%%%%%%%%%%%%%%%%%%%%%%%%%%%%%%%%%%%%%%%%%%%%%
\section{Linear Recurrences with Fibonacci Numbers} \label{preliminaries}

We began by looking at decompositions using only every third Fibonacci number; in our notation this would be a $2$-gap Fibonacci sequence with an offset of $2$. We choose this offset so that our first term is $F_2 = 1$, consistent with the initial conditions in \eqref{evenDecomp}. This allows us to begin finding patterns for the general $n$-gap Fibonacci sequence. In this case, we define
\begin{equation}
\label{f(k,2,2)}
\{\mathcal{F}(k;2,2)\}^{\infty}_{k = 0} \ = \ \{1,\ 5,\  21,\  89,\  377,\  1597,\  \dots\}.
\end{equation}
The sequence in \eqref{f(k,2,2)} can itself be defined recursively.

\begin{lemma}
\label{2-gap recurrence}
For $k\geq 2$, \be \label{2gaprec}
F_{3k+2} \ = \ 4\cdot F_{3(k-1)+2}+F_{3(k-2)+2}.
\ee
\end{lemma}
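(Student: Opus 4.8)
The plan is to exploit the fact that extracting every third term of a sequence governed by a degree-two linear recurrence produces a new degree-two linear recurrence whose characteristic roots are the \emph{cubes} of the original roots. Concretely, Binet's Formula \eqref{BinetFormula} writes $F_n = \frac{1}{\sqrt{5}}\left(\phi^n - \psi^n\right)$, where $\phi = \frac{1+\sqrt{5}}{2}$ and $\psi = \frac{1-\sqrt{5}}{2} = -\phi^{-1}$ are the two roots of $x^2 - x - 1$. Substituting $n = 3k+2$ gives
\[
F_{3k+2} \ = \ \frac{1}{\sqrt{5}}\left(\phi^2 (\phi^3)^k - \psi^2 (\psi^3)^k\right),
\]
so, regarded as a sequence in $k$, $F_{3k+2}$ is a linear combination of the geometric sequences $(\phi^3)^k$ and $(\psi^3)^k$.

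First I would record the two elementary symmetric functions of the new roots. Since $\phi + \psi = 1$ and $\phi\psi = -1$, the power-sum identity gives $\phi^3 + \psi^3 = (\phi+\psi)^3 - 3\phi\psi(\phi+\psi) = 1 + 3 = 4$, while $\phi^3\psi^3 = (\phi\psi)^3 = -1$. Hence $\phi^3$ and $\psi^3$ are exactly the roots of $x^2 - 4x - 1 = 0$. Next I would invoke the standard fact that any sequence of the form $u_k = A r^k + B s^k$ satisfies $u_k = (r+s)u_{k-1} - rs\,u_{k-2}$. Applying this with $r = \phi^3$, $s = \psi^3$, $A = \phi^2/\sqrt{5}$, and $B = -\psi^2/\sqrt{5}$ yields $u_k = 4u_{k-1} + u_{k-2}$, which is precisely \eqref{2gaprec} once we note $u_k = F_{3k+2}$, $u_{k-1} = F_{3(k-1)+2}$, and $u_{k-2} = F_{3(k-2)+2}$. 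The restriction $k \geq 2$ is only needed so that the index $3(k-2)+2$ stays in range.

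There is no serious obstacle here; the only care required is the symmetric-function bookkeeping and checking the index bounds. As an independent sanity check I would also verify \eqref{2gaprec} by directly telescoping the Fibonacci recurrence: repeatedly applying \eqref{FibDef} gives $F_{3k+2} = 2F_{3k} + F_{3k-1} = 3F_{3k-1} + 2F_{3k-2}$, and then the identity $2F_{3k-2} - F_{3k-1} = F_{3k-2} - F_{3k-3} = F_{3k-4} = F_{3(k-2)+2}$ converts this into $4F_{3k-1} + F_{3k-4}$, matching the claim. I would emphasize that the Binet argument never used the specific offset $2$: the identical computation shows $F_{3k+m} = 4F_{3(k-1)+m} + F_{3(k-2)+m}$ for every offset $m$, since changing $m$ only rescales the constants $A$ and $B$ and leaves the roots $\phi^3,\psi^3$ (hence the recurrence coefficients $4$ and $1$) untouched. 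This is the mechanism that will drive the general $n$-gap recurrences in the remainder of the section.
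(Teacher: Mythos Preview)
Your proof is correct. Your \emph{primary} argument --- Binet's formula plus the symmetric functions of $\phi^3,\psi^3$ --- is a genuinely different route from the paper's, which establishes \eqref{2gaprec} purely by repeated telescoping of the defining recurrence $F_j=F_{j-1}+F_{j-2}$, exactly the computation you relegate to a ``sanity check.'' In effect you have swapped the paper's order of presentation: the paper first does the elementary telescoping for the special case $n=2$ (Lemma~\ref{2-gap recurrence}) and only afterwards introduces Binet's formula to handle general~$n$ (Lemma~\ref{n-gap fib recurrence}), whereas you lead with the Binet/characteristic-root argument and note the telescoping as confirmation. The advantage of your ordering is that it makes the offset-independence and the general $n$-gap mechanism transparent from the outset, and the bookkeeping via $\phi^3+\psi^3=4$, $\phi^3\psi^3=-1$ is cleaner than the paper's later expansion; the advantage of the paper's ordering is that the $n=2$ case requires nothing beyond $F_j=F_{j-1}+F_{j-2}$, so no appeal to Binet is needed just to motivate the pattern.
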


\begin{proof} We repeatedly use the recursion $F_j = F_{j - 1} + F_{j - 2}$ to calculate
\begin{align}\label{2-gap recurrence calc}
F_{3k+2} & \ = \  F_{3k + 1} + F_{3k} \nonumber\\
&\ = \  F_{3k} + F_{3k-1} + F_{3k-1} + F_{3k-2} \nonumber\\
& \ = \  F_{3k-1} + F_{3k-2} + F_{3k-1} + F_{3k-1} + F_{3k-3} + F_{3k-4}\nonumber\\
& \ = \  3\cdot F_{3k-1} + F_{3k-2} + F_{3k-3} + F_{3k-4}\nonumber\\
& \ = \  4\cdot F_{3k-1} + F_{3k-4}\nonumber\\
& \ = \  4\cdot F_{3(k-1)+2}+F_{3(k-2)+2}.
\end{align}
\end{proof}

By a procedure analogous to \eqref{2-gap recurrence calc} we can also generate the following identities, which hold for all $k \geq 2$:
\begin{align}\label{sampleRecurs}
F_{4k+2}& \ = \  7\cdot F_{4(k-1)+2}-F_{4(k-2)+2} \nonumber\\
F_{5k+2}& \ = \  11\cdot F_{5(k-1)+2}+F_{5(k-2)+2} \nonumber\\
F_{6k+2}& \ = \  18\cdot F_{6(k-1)+2}-F_{6(k-2)+2} \nonumber\\
F_{7k+2}& \ = \  29\cdot F_{7(k-1)+2}+F_{7(k-2)+2}.
\end{align}

Notice that $3, 4, 7, 11, 18, 29,\dots$ are the Lucas numbers, which have the closed form $\phi^k+(-\phi)^{-k}$. Since the golden ratio is defined as $\phi = (1 + \sqrt{5})/2$, the Lucas numbers are the closest integer to $\phi^k$ for each $k > 1$, because $\left|(-\phi)^{-k}\right| < 1/2$ for $k > 1$. This motivates the question of whether every $n$-gap Fibonacci subsequence can be defined recursively, and then what decomposition properties they have.  Using Binet's formula \eqref{BinetFormula} for Fibonacci numbers, we can generalize the formulas \eqref{2gaprec} and \eqref{sampleRecurs}.

\begin{lemma}\label{n-gap fib recurrence} For any $n \geq 2$ we have the following generalization of \eqref{2gaprec}:
\be \label{ngaprec}
\mathcal{F}(k;n,m)\ = \ a_n\cdot \mathcal{F}(k-1;n,m)+(-1)^{n-1}\cdot \mathcal{F}(k-1;n,m),
\ee
where $a_n$ henceforth will denote $\phi^n$ rounded to the nearest integer (the Lucas numbers).
\end{lemma}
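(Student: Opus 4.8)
The plan is to derive the recurrence directly from Binet's formula \eqref{BinetFormula}, rather than by iterating the Fibonacci recursion as in the proof of Lemma \ref{2-gap recurrence}, which grows unwieldy as $n$ increases. Write $g = n+1$ for the common gap between consecutive indices, and set $\psi := -\phi^{-1} = (1-\sqrt{5})/2$, so that \eqref{BinetFormula} reads $F_j = (\phi^j - \psi^j)/\sqrt{5}$ and therefore
\[
\mathcal{F}(k;n,m) \ = \ F_{gk+m} \ = \ \frac{\phi^m\,(\phi^g)^k - \psi^m\,(\psi^g)^k}{\sqrt{5}}.
\]

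First I would observe that, for fixed $m$, the sequence $k \mapsto \mathcal{F}(k;n,m)$ is a fixed linear combination of the two geometric sequences $(\phi^g)^k$ and $(\psi^g)^k$. A second-order recurrence $u_k = A\,u_{k-1} + B\,u_{k-2}$ is satisfied by both of these geometric sequences exactly when $\phi^g$ and $\psi^g$ are the two roots of its characteristic polynomial $x^2 - Ax - B$; by linearity it is then satisfied by every combination of them, in particular by $\mathcal{F}(k;n,m)$. Hence it suffices to read off $A$ and $B$ from the elementary symmetric functions of the roots, namely $A = \phi^g + \psi^g$ and $-B = \phi^g\psi^g$.

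These evaluate cleanly. Since $\phi\psi = -1$ we get $\phi^g\psi^g = (\phi\psi)^g = (-1)^g$, so $B = (-1)^{g+1}$; and $\phi^g + \psi^g$ is the $g$-th Lucas number, which for $g = n+1 \ge 3$ satisfies $|\psi^g| = \phi^{-g} < 1/2$ and so equals $\phi^g$ rounded to the nearest integer, i.e.\ the quantity denoted $a_{n+1}$. Substituting $g = n+1$ yields $A = a_{n+1}$ and $B = (-1)^{n+2} = (-1)^n$, giving
\[
\mathcal{F}(k;n,m) \ = \ a_{n+1}\,\mathcal{F}(k-1;n,m) + (-1)^{n}\,\mathcal{F}(k-2;n,m),
\]
which at $n=2$ returns \eqref{2gaprec} (here $a_3 = 4$ and $(-1)^2 = +1$) and reproduces each line of \eqref{sampleRecurs}.

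I expect the only real difficulty to be bookkeeping rather than substance. One must be careful that the coefficient is governed by the gap $g = n+1$, so it is $a_{n+1}$ (not $a_n$), and that the correct sign is $(-1)^n$ multiplying the $\mathcal{F}(k-2;n,m)$ term; checking these against the worked examples in \eqref{sampleRecurs} is what pins the statement down precisely. The underlying principle — that a sequence assembled from two geometric terms obeys the monic quadratic recurrence whose characteristic roots are those two ratios — is immediate once $\mathcal{F}(k;n,m)$ is expanded through Binet's formula.
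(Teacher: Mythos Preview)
Your argument is correct and, like the paper's proof, rests on Binet's formula; the difference is in packaging. The paper expands $a_n\cdot\mathcal{F}(k-1;n,m)$ and $(-1)^{n-1}\cdot\mathcal{F}(k-2;n,m)$ separately via Binet, then adds them and simplifies by brute force over roughly a page of algebra until only $(\phi^{nk+m}-(-\phi)^{-nk-m})/\sqrt{5}$ survives. You instead observe that $\mathcal{F}(k;n,m)$ is a fixed linear combination of the two geometric progressions $(\phi^g)^k$ and $(\psi^g)^k$, and read the recurrence coefficients off the characteristic polynomial via Vieta: $A=\phi^g+\psi^g=L_g$ and $B=-(\phi\psi)^g=(-1)^{g+1}$. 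This is the same idea compressed into one line, and it makes the appearance of the Lucas number and the alternating sign transparent rather than emergent from cancellation.

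You are also right to flag the indexing. Definition~\ref{fib n-gap seq} sets $\mathcal{F}(k;n,m)=F_{k(n+1)+m}$, so the gap is $g=n+1$, which forces the coefficients to be $a_{n+1}$ and $(-1)^n$ exactly as you obtain and as the checks against \eqref{2gaprec} and \eqref{sampleRecurs} confirm. The paper's own proof silently treats $\mathcal{F}(k;n,m)$ as $F_{nk+m}$ (gap $g=n$), which is why its stated coefficients are $a_n$ and $(-1)^{n-1}$; this is an internal notational inconsistency in the paper rather than an error in your derivation.
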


\begin{proof}
We take advantage of the Lucas numbers and Binet's formula \eqref{BinetFormula} to rewrite each term on the right hand side of \eqref{ngaprec}:
\begin{align}
a_n\cdot \mathcal{F}(k-1;n,m)& \ = \ \left((\phi^n+(-\phi)^{-n})\cdot \frac{1}{\sqrt{5}}(\phi^{nk-n+m}-(-\phi)^{n-nk-m})\right)\nonumber\\
(-1)^{n-1}\cdot \mathcal{F}(k-2;n,m)& \ = \ (-1)^{n-1}\cdot \frac{1}{\sqrt{5}}\left(\phi^{nk-2n+m}-(-\phi)^{-nk+2n-m}\right).
\end{align}

We simplify each component algebraically:
\begin{align}
  a_n\cdot \mathcal{F}(k-1;n,m)& \ = \  \frac{1}{\sqrt{5}}\left((\phi^n+(-\phi)^{-n})(\phi^{nk-n+m}-(-\phi)^{n-nk-m})\right)\nonumber\\
  & \ = \ \frac{1}{\sqrt{5}}\left(\phi^n \cdot \phi^{nk-n+m}-\phi^n\cdot (-\phi)^{n-nk-m}\right.\nonumber\\
  &\quad\ \ + \left.(-\phi)^{-n}\cdot \phi^{nk-n+m}-(-\phi)^{-n}\cdot (-\phi)^{n-nk-m}\right)\nonumber\\
  & \ = \ \frac{1}{\sqrt{5}}\left(\phi^{nk+m}-\phi^n\cdot (-1)^{n-nk-m}\cdot \phi^{n-nk-m}\right.\nonumber\\
  &\quad\ \ + \left.(-1)^{-n}\cdot (\phi)^{-n}\cdot\phi^{nk-n+m}-(-\phi)^{-nk-m}\right) \nonumber\\
  & \ = \ \frac{1}{\sqrt{5}}\left(\phi^{nk+m}+(-1)^{n-nk}\cdot \phi^{2n-nk-m}\right.\nonumber\\
  &\quad\ \ + \left.(-1)^{-n}\cdot \phi^{nk-2n+m}-(-\phi)^{-nk-m}\right),
\end{align}
and
\begin{align}  (-1)^{n-1}\cdot \mathcal{F}(k-2;n,m) & \ = \ \frac{1}{\sqrt{5}}\left((-1)^{n-1}(\phi^{nk-2n+m}-(-\phi)^{2n-nk-m})\right)\nonumber\\
  & \ = \ \frac{1}{\sqrt{5}}\left((-1)^{n-1}\cdot \phi^{nk-2n+m}- (-1)^{n-1}\cdot (-1)^{2n-nk-m}\cdot \phi^{2n-nk-m}\right)\nonumber\\
  & \ = \ \frac{1}{\sqrt{5}}\left((-1)^{n-1}\cdot \phi^{nk-2n+m}- (-1)^{1-n}\cdot (-1)^{2n-nk-m}\cdot \phi^{2n-nk-m}\right)\nonumber\\
  & \ = \ \frac{1}{\sqrt{5}}\left((-1)^{n-1}\cdot \phi^{nk-2n+m}- (-1)^{n-nk}\cdot \phi^{2n-nk-m}\right).
\end{align}

We now sum and simplify the above, and obtain
\begin{align}
&a_n\cdot \mathcal{F}(k-1;n,m)+(-1)^{n-1}\cdot \mathcal{F}(k-1;n,m)\nonumber\\
& \ = \ \frac{1}{\sqrt{5}}\left(\phi^{nk+m}+(-1)^{n-nk}\cdot \phi^{2n-nk-m}+(-1)^{-n}\cdot \phi^{nk-2n+m}-(-\phi)^{-nk-m}\right.\nonumber\\
&\quad \ \ + \ \left.(-1)^{n-1}\cdot \phi^{nk-2n+m}- (-1)^{n-nk}\cdot \phi^{2n-nk-m}\right)\nonumber\\
& \ = \ \frac{1}{\sqrt{5}}\left(\phi^{nk+m}+\cancel{(-1)^{n-nk}\cdot \phi^{2n-nk-m}}+(-1)^{-n}\cdot \phi^{nk-2n+m}-(-\phi)^{-nk-m}\right.\nonumber\\
&\quad \ \ + \ \left.(-1)^{n-1}\cdot \phi^{nk-2n+m}-\cancel{(-1)^{n-nk}\cdot \phi^{2n-nk-m}}\right)\nonumber\\
& \ = \ \frac{1}{\sqrt{5}}\left(\phi^{nk+m}+(-1)^{-n}\cdot \phi^{nk-2n+m}-(-\phi)^{-nk-m}+(-1)^{n-1}\cdot \phi^{nk-2n+m}\right)\nonumber\\
& \ = \ \frac{1}{\sqrt{5}}\left(\phi^{nk+m}+\cancel{(-1)^{-n}\cdot \phi^{nk-2n+m}}-(-\phi)^{-nk-1}+\cancel{(-1)^{n-1}\cdot \phi^{nk-2n+m}}\right)\nonumber\\
& \ = \ \frac{1}{\sqrt{5}}\left(\phi^{nk+m}-(-\phi)^{-nk-m}\right)\nonumber\\
& \ = \ \mathcal{F}(k;n,m),
\end{align} which is the desired result.
\end{proof}

We can generalize Lemma \ref{n-gap fib recurrence} to all $n$-gap subsequences of the recurrence relation
\be \label{G_nRecoriginal}
G_n \ = \ G_{n-1}+G_{n-2},
\ee
where $G_1$ and $G_2$ are positive integers; to do this we first prove a recurrence relating $\{F_{\ell}\}^{\infty}_{\ell = 1}$ to $\{G_{\ell}\}^{\infty}_{\ell = 1}$.
%note that taking $G_1=G_2=1$ recovers the Fibonacci sequence.

\begin{lemma}
\label{G_nRecIdentity}
If $\{G_{\ell}\}^{\infty}_{\ell = 1}$ satisfies \eqref{G_nRecoriginal} then for $n \geq 3$,
\be \label{G_nRec}
G_n \ = \ F_{n-2}\cdot G_1+F_{n-1}\cdot G_2.
\ee
\end{lemma}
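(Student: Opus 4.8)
The plan is to establish \eqref{G_nRec} by a two-step induction on $n$, working throughout with the conventions $F_0 = 0$, $F_1 = 1$ fixed in Theorem \ref{ZeckThm} (so that $F_1 = F_2 = 1$ and $F_3 = 2$). Because the recurrence \eqref{G_nRecoriginal} expresses $G_n$ in terms of its two immediate predecessors, the inductive step at $n$ will invoke the claimed identity at both $n-1$ and $n-2$; consequently I first verify two consecutive base cases and only then run the induction for $n \geq 5$, at which point $n-2 \geq 3$ so that the hypothesis applies.

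For the base cases I would check $n = 3$ and $n = 4$ directly from \eqref{G_nRecoriginal}: since $F_1 = F_2 = 1$ we get $G_3 = G_1 + G_2 = F_1 G_1 + F_2 G_2$, and since $F_2 = 1$, $F_3 = 2$ we get $G_4 = G_3 + G_2 = G_1 + 2G_2 = F_2 G_1 + F_3 G_2$, both matching \eqref{G_nRec}. For the inductive step, assume the identity holds for all indices $k$ with $3 \leq k < n$ (in particular at $n-1$ and $n-2$). Then
\begin{align*}
G_n &= G_{n-1} + G_{n-2} \\
&= \left(F_{n-3} G_1 + F_{n-2} G_2\right) + \left(F_{n-4} G_1 + F_{n-3} G_2\right) \\
&= \left(F_{n-3} + F_{n-4}\right) G_1 + \left(F_{n-2} + F_{n-3}\right) G_2 \\
&= F_{n-2} G_1 + F_{n-1} G_2,
\end{align*}
where the final line applies the Fibonacci recurrence \eqref{FibDef} to each coefficient. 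This is exactly \eqref{G_nRec}, which closes the induction.

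There is essentially no difficult step here; the only thing to watch is the index bookkeeping, namely ensuring the offsets $n-2, n-3, n-4$ all land on genuine nonnegative-index Fibonacci numbers, which is precisely why the induction proper begins at $n = 5$ while $n = 3, 4$ are treated separately. As a slicker alternative I could argue structurally: the solutions of \eqref{G_nRecoriginal} form a two-dimensional space, the shifted sequences $(F_{n-2})_n$ and $(F_{n-1})_n$ are both solutions, and they are linearly independent since their initial vectors $(F_{-1}, F_0) = (1,0)$ and $(F_0, F_1) = (0,1)$ are. Writing $G_n = c_1 F_{n-2} + c_2 F_{n-1}$ and matching at $n = 1, 2$ forces $c_1 = G_1$ and $c_2 = G_2$, recovering \eqref{G_nRec}. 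This route is cleaner conceptually but requires extending the Fibonacci indexing to $F_{-1} = 1$, whereas the induction above stays entirely within the conventions already in play.
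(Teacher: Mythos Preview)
Your proof is correct and follows essentially the same strong-induction argument as the paper's own proof. You are in fact slightly more careful: the paper verifies only the single base case $n=3$, whereas you also check $n=4$ so that the inductive step---which draws on the identity at both $n-1$ and $n-2$---is fully grounded.
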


\begin{proof}
We proceed by strong induction. The base case will be $n = 3$, which is verified as follows:
\be\label{G_nRecIdentityEq1}
G_3 \ = \ G_2 + G_1 \ = \ G_2 \cdot 1 + G_1 \cdot 1 \ = \ G_2 \cdot F_2 + G_1 \cdot F_1.
\ee
As for the inductive step, we assume for all $3 \leq j \leq k$ that
\be\label{G_nRecIdentityEq2}
G_j \ = \ F_{j - 2} \cdot G_1 + F_{j - 1} \cdot G_2,
\ee
and we can finish the proof by demonstrating that
\be\label{G_nRecIdentityEq3}
G_{k + 1} \ = \ F_{k - 1} \cdot G_1 + F_k \cdot G_2.
\ee
We can show \eqref{G_nRecIdentityEq3} by using \eqref{G_nRecIdentityEq2} for $j = k$ and $j = k - 1$, along with the recurrence \eqref{FibDef}:
\begin{align}\label{G_nRecIdentityEq4}
G_{k + 1} & \ = \ G_{k} + G_{k - 1} \nonumber \\
& \ = \ F_{k - 2} \cdot G_1 + F_{k - 1} \cdot G_2 + F_{k - 3} \cdot G_1 + F_{k - 2} \cdot G_2 \nonumber \\
& \ = \ (F_{k - 2} + F_{k - 3}) \cdot G_1 + (F_{k - 1} + F_{k - 2}) \cdot G_2 \nonumber \\
& \ = \ F_{k - 1} \cdot G_1 + F_k \cdot G_2,
\end{align}
as desired.
\end{proof}

Now we can prove our generalization of Lemma \ref{n-gap fib recurrence}.

\begin{lemma}
\label{n-gap gen recurrence lemma}
For $k\geq 2$, if $\{G_{\ell}\}^{\infty}_{\ell = 1}$ satisfies \eqref{G_nRecoriginal} then
\be \label{n-gap gen recurrence}
G_{nk+m} \ = \ a_n\cdot G_{n(k-1)+m}+(-1)^{n-1}\cdot G_{n(k-2)+m},
\ee
where $n$, $k$, $m$, and $a_n$ are defined as before, regardless of the initial conditions.
\end{lemma}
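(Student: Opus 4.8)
The plan is to reduce the general statement to the Fibonacci case already settled in Lemma \ref{n-gap fib recurrence}, exploiting the fact that the claimed identity \eqref{n-gap gen recurrence} is \emph{linear} in the sequence $\{G_\ell\}$. Since the solutions of \eqref{G_nRecoriginal} form a two-dimensional space, and Lemma \ref{G_nRecIdentity} exhibits every such $G$ as the linear combination $G_j = F_{j-2}\,G_1 + F_{j-1}\,G_2$, it suffices to check the recurrence on the two Fibonacci ``components'' and then reassemble by linearity.

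Concretely, I would apply Lemma \ref{G_nRecIdentity} to each of the three terms $G_{nk+m}$, $G_{n(k-1)+m}$, and $G_{n(k-2)+m}$, substitute $G_j = F_{j-2}\,G_1 + F_{j-1}\,G_2$ into the right-hand side of \eqref{n-gap gen recurrence}, and collect the coefficients of $G_1$ and of $G_2$ separately. The statement then splits into two identities purely about Fibonacci numbers, namely
\[
a_n\,F_{n(k-1)+(m-2)} + (-1)^{n-1}\,F_{n(k-2)+(m-2)} \ = \ F_{nk+(m-2)},
\]
together with the analogous identity having offset $m-1$ in place of $m-2$. Each of these is exactly the Fibonacci recurrence \eqref{ngaprec} of Lemma \ref{n-gap fib recurrence}, read with offset $m-2$ (respectively $m-1$). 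Consequently the right-hand side of \eqref{n-gap gen recurrence} collapses to $F_{nk+m-2}\,G_1 + F_{nk+m-1}\,G_2$, which is $G_{nk+m}$ by one final application of \eqref{G_nRec}, finishing the argument.

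The only point demanding care, and the main potential obstacle, is bookkeeping at the index boundaries: the shifted offsets $m-1$ and $m-2$ need not obey the normalization $0 \le m < n+1$ of Definition \ref{fib n-gap seq} and may even be negative when $m \in \{0,1\}$, while invoking \eqref{G_nRec} with $k = 2$ can require values of $G$ below index $3$. Neither is a genuine difficulty, since both the Fibonacci identity \eqref{ngaprec} and the representation \eqref{G_nRec} ultimately rest on Binet's formula \eqref{BinetFormula}, valid for every integer index; extending $F$ (and $G$ via \eqref{G_nRecoriginal}) to nonpositive indices in the usual way lets the reduction go through verbatim. I would also record the conceptual reason the constants $a_n$ and $(-1)^{n-1}$ are forced: writing $G_\ell = A\phi^\ell + B(-\phi)^{-\ell}$, the sequence $k \mapsto G_{nk+m}$ is a linear combination of $(\phi^n)^k$ and $\big((-\phi)^{-n}\big)^k$, whose characteristic polynomial has roots summing to $\phi^n + (-\phi)^{-n} = a_n$ and with product $\phi^n(-\phi)^{-n} = (-1)^n$, yielding the recurrence $y_k = a_n y_{k-1} + (-1)^{n-1} y_{k-2}$. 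This gives an alternative, fully self-contained derivation independent of the two-lemma reduction.
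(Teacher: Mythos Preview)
Your proposal is correct and follows essentially the same route as the paper: substitute $G_j = F_{j-2}G_1 + F_{j-1}G_2$ from Lemma~\ref{G_nRecIdentity} into the right-hand side of \eqref{n-gap gen recurrence}, regroup by $G_1$ and $G_2$, invoke \eqref{ngaprec} on each Fibonacci coefficient, and finish with one more application of \eqref{G_nRec}. Your remarks on index boundaries and the characteristic-polynomial heuristic are helpful additions, but the core argument is the paper's own.
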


\begin{proof}
We can use \eqref{ngaprec} in conjunction with \eqref{G_nRec} to compute
\begin{align}
    a_n\cdot G_{n(k-1)+m}+(-1)^{n-1}\cdot G_{n(k-2)+m}
    & \ = \ a_n\cdot (F_{n(k-1)+m-2}\cdot G_1 +F_{n(k-1)+m-1}\cdot G_2)\nonumber\\
    &\quad\ \ + (-1)^{n-1}(F_{n(k-2)+m-2}\cdot G_1 +F_{n(k-2)+m-1}\cdot G_2)\nonumber\\
    & \ = \ (a_n\cdot F_{n(k-1)+m-2} +(-1)^{n-1}\cdot F_{n(k-2)+m-2})\cdot G_1\nonumber\\
    &\quad\ \ + (a_n\cdot F_{n(k-1)+m-1}+(-1)^{n-1}\cdot F_{n(k-2)+m-1})\cdot G_2\nonumber\\
    & \ = \ F_{nk+m-2}\cdot G_1+F_{nk+m-1}\cdot G_2\nonumber\\
    & \ = \ G_{nk+m}.
\end{align}
\end{proof}

\begin{remark} It turns out there are alternative proofs to Lemmas \ref{n-gap fib recurrence} and \ref{n-gap gen recurrence lemma} that rely on the established generating function theory of multisections . We provide the details in Appendix \ref{multisectionappen}.
\end{remark}

For convenience, we restate the definition of Positive Linear Recurrence Sequences (Definition \ref{PLRSdef}) so that we can solidify the framework to be used in Section \ref{decompResult}.

\ \\

\noindent \textbf{Definition \ref{PLRSdef}.} \emph{A Positive Linear Recurrence Sequence is a sequence of integers $\{H_n\}_{n=1}^{\infty}$ with the following properties.
\begin{enumerate}
    \item There are non-negative integers $L, c_1,\dots,c_L$ such that
    \be \label{PLRSCond1}
    H_{n} \ = \ c_1H_{n-1} + c_2H_{n-2} + \cdots + c_LH_{n-L}
    \ee
    where $L, c_1, c_L > 0$.
    \item $H_1$ = 1 and for $1\leq n<L$, we have:
    \be \label{PLRSCond2}
    H_{n} \ = \ c_1H_{n-1} + c_2H_{n-2} + \cdots + c_{n-1}H_1+1.
    \ee
\end{enumerate}}

\ \\

Here is a simple example demonstrating how to prove that a sequence is a PLRS, particularly the 2-gap sequence.

\begin{example}\label{2gapEx} We can show directly that
\be \{\mathcal{F}(k;2,2)\}^{\infty}_{k = 0} \  = \  \{1, \ 5,\ 21,\ 89,\ 377, \ \dots\}\ee
is a PLRS. Define
\be \{G_k\}_{k=1}^{\infty}\ = \ \{1,\ 5,\ 21,\ 89,\ 377,\ \dots\},\ee
and we check each condition in Definition \ref{PLRSdef}.
\begin{enumerate}
    \item The first condition is true, because we can take $L=2, c_1=4$, and  $c_2=1$; then our recurrence is $G_k = 4G_{k-1}+G_{k-2}$.
    \item The second condition also holds; since $5 = 4\cdot 1+1$, we conclude $G_1 = 1$, and $G_2 = 4G_1+1$.
\end{enumerate}
\end{example}

In the next section we generalize Example \ref{2gapEx}.

%%%%%%%%%%%%%%%%%%%%%%%%%%%%%%%%%%%%%%%%%%%%%%%%%%%%%%%%%%%%%%%%%%%%%%%%%%%%%%%%%%%%%%%%%%%%%%%%%%%%%%%%%%%%%%%%%%%%%%%%%%%%%%%%%%%%%%%%%%%%%%%%%%%%%%
%%%%%%%%%%%%%%%%%%%%%%%%%%%%%%%%%%%%%%%%%%%%%%%%%%%%%%%%%%%%%%%%%%%%%%%%%%%%%%%%%%%%%%%%%%%%%%%%%%%%%%%%%%%%%%%%%%%%%%%%%%%%%%%%%%%%%%%%%%%%%%%%%%%%%%
%%%%%%%%%%%%%%%%%%%%%%%%%%%%%%%%%%%%%%%%%%%%%%%%%%%%%%%%%%%%%%%%%%%%%%%%%%%%%%%%%%%%%%%%%%%%%%%%%%%%%%%%%%%%%%%%%%%%%%%%%%%%%%%%%%%%%%%%%%%%%%%%%%%%%%
\section{Decomposition Results}\label{decompResult}
In this section we restate and prove the main result of the paper, building on the intuition developed in Section \ref{preliminaries}. We quote a primary result from \cite{MW1} that yields the uniqueness of decompositions for the sequences in Section \ref{preliminaries}.

\begin{thm}(Generalized Zeckendorf's Theorem for PLRS)\label{genZeckPLRS}
Let $\{H_j\}_{j=0}^{\infty}$ be a Positive Linear Recurrence Sequence. Then
\begin{enumerate}
\item there is a unique legal decomposition for each positive integer $N \geq 0$, and
\item there is a bijection between the set $S_j$ of integers in $[H_j, H_{j+1})$ and the set $D_j$ of legal decompositions $\sum_{i=1}^j b_i\cdot H_{j+1-i}$.
\end{enumerate}
\end{thm}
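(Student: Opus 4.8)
The plan is to follow the combinatorial strategy behind the Miller--Wang result: fix the precise meaning of a \emph{legal} decomposition, prove a sharp value-range lemma, and then read off both claims from a counting identity. First I would recall the definition of legality attached to the recurrence \eqref{PLRSCond1}. Writing a candidate decomposition as $\sum_{i=1}^{j} b_i H_{j+1-i}$ (so $b_1$ is the coefficient of the largest summand $H_j$), it is legal precisely when its coefficient string, read from the most significant term downward, never completes a full ``recurrence block'' $(c_1,c_2,\dots,c_L)$: there is an index $s\le L$ with $b_1=c_1,\dots,b_{s-1}=c_{s-1}$ and $b_s<c_s$, after which the truncated string $(b_{s+1},b_{s+2},\dots)$ is itself legal, with the shortest decompositions governed by the initial-segment rule \eqref{PLRSCond2}. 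The conceptual point is that a completed block could be rewritten via \eqref{PLRSCond1} as a single higher-indexed summand, so legality is exactly the condition ruling out such carries.

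The key lemma I would prove is that any legal decomposition using only the summands $H_1,\dots,H_j$ has value strictly less than $H_{j+1}$, and that if in addition its coefficient of $H_j$ is nonzero then its value is at least $H_j$. The lower bound is immediate since $b_1\ge 1$ forces the value to be at least $H_j$. The upper bound is the heart of the matter, and I would establish it by strong induction on $j$: peel off the leading block $b_1=c_1,\dots,b_{s-1}=c_{s-1},\ b_s<c_s$, bound its contribution by $c_1H_j+\cdots+c_{s-1}H_{j-s+2}+(c_s-1)H_{j-s+1}$, and apply the inductive hypothesis to the legal tail, whose top index is at most $j-s$ so that its value is below $H_{j-s+1}$. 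Summing these and invoking \eqref{PLRSCond1} gives a total strictly below $H_{j+1}$. Correctly handling the multi-term recurrence ($L>2$), the interaction of successive blocks, and the boundary cases coming from \eqref{PLRSCond2} is the delicate part, and I expect this upper bound to be the main obstacle.

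Granting the value-range lemma, injectivity of the evaluation map on each $D_j$ is clean. If two legal decompositions in $D_j$ first disagree at the coefficient of $H_{j+1-i}$, with one coefficient exceeding the other by at least one, then that single position contributes at least $H_{j+1-i}$, whereas the entire tail of the smaller decomposition is a legal decomposition using only summands of index at most $j-i$ and hence, by the lemma, has value strictly below $H_{j+1-i}$. Therefore the two decompositions cannot evaluate to the same integer, so the evaluation map $D_j\hookrightarrow S_j$ is injective.

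Finally I would prove the counting identity $|D_j|=H_{j+1}-H_j=|S_j|$. Classifying legal strings by the length and content of their leading block and using \eqref{PLRSCond1} produces a recursion showing that the sequence $|D_j|$ satisfies the very same linear recurrence as $\{H_j\}$, while \eqref{PLRSCond2} pins down the base cases so that $|D_j|=H_{j+1}-H_j$ (the difference of two solutions of \eqref{PLRSCond1} is again a solution). Since $S_j=[H_j,H_{j+1})\cap\mathbb{Z}$ also has exactly $H_{j+1}-H_j$ elements and the evaluation map is an injection $D_j\to S_j$, it must be a bijection, which is part (2). Part (1) then follows immediately by taking the union over $j$: every integer $N\ge 1$ lies in a unique $S_j$, hence is the value of exactly one legal decomposition, yielding existence and uniqueness at once.
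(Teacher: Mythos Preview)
The paper does not prove Theorem~\ref{genZeckPLRS} at all; it is explicitly \emph{quoted} from Miller--Wang \cite{MW1} and used as a black box in Section~\ref{decompResult}. So there is no in-paper argument to compare your proposal against.

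That said, your outline is a faithful sketch of the Miller--Wang strategy: a value-range lemma showing legal decompositions with top summand $H_j$ land in $[H_j,H_{j+1})$, together with the counting identity $|D_j|=H_{j+1}-H_j$, is precisely how \cite{MW1} proceeds. One point deserves care. In your injectivity step you write that ``the entire tail of the smaller decomposition is a legal decomposition using only summands of index at most $j-i$,'' and then invoke the value-range lemma. But legality is defined block-by-block from the top, so truncating a legal string at an arbitrary position (possibly mid-block) need not leave a legal tail, and the lemma as you stated it does not directly apply. The standard fixes are either to strengthen the value-range lemma so that it bounds every suffix of a legal string (this comes out of the same induction with a little extra bookkeeping), or to avoid direct injectivity altogether by establishing surjectivity of $D_j\to S_j$ via the greedy algorithm and then using $|D_j|=|S_j|$ to force bijectivity. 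Either route works and is what \cite{MW1} does in effect.
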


We now generalize the result from Example \ref{2gapEx} to the $n$-gap Fibonacci sequences, using Theorem \ref{genZeckPLRS}. Notably, this generalization only extends to odd $n$ due to the $(-1)^{n+1}$ factor of the second term in each recurrence relation amongst the list \eqref{sampleRecurs}. Thus the sequences we study in the next theorem are of the form $\{F_{nk+m}\}^{\infty}_{k=0}$, where $n \geq 3$ is a fixed positive odd integer.

\begin{thm} ($n$-gap Fibonaccis as PLRS)
\label{n-gap PLRS}
If $n \geq 3$ is odd, then the $n$-gap Fibonacci sequence $\{\mathcal{F}(k;n,m)\}^{\infty}_{k = 0}$ is a PLRS.
\end{thm}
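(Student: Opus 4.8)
The plan is to verify the two defining conditions of a PLRS (Definition \ref{PLRSdef}) directly, since essentially all of the work has already been done by Lemma \ref{n-gap fib recurrence} (equivalently, by the version for arbitrary initial conditions, Lemma \ref{n-gap gen recurrence lemma}). Specializing that recurrence to the sequence at hand gives, for $k \geq 2$,
\[
\mathcal{F}(k;n,m) \ = \ a_n\cdot \mathcal{F}(k-1;n,m) + (-1)^{n-1}\cdot \mathcal{F}(k-2;n,m),
\]
so the whole theorem reduces to reading off the coefficients of this two-term recurrence and checking the leading term.

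The conceptual heart of the argument — and the precise reason the hypothesis restricts to odd $n$ — is the sign of the lag-two coefficient. When $n$ is odd, $n-1$ is even, so $(-1)^{n-1} = 1$ and the recurrence collapses to
\[
\mathcal{F}(k;n,m) \ = \ a_n\cdot \mathcal{F}(k-1;n,m) + \mathcal{F}(k-2;n,m),
\]
in which both coefficients are positive integers. For even $n$ the lag-two coefficient is instead $-1$, violating the non-negativity demanded by \eqref{PLRSCond1}; this is exactly why the even case is not a PLRS of this shape and must be handled separately via the decomposition \eqref{evenDecomp}.

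With the recurrence in this form I would set $L = 2$, $c_1 = a_n$, and $c_2 = 1$, and verify condition (1) of Definition \ref{PLRSdef}: here $L = 2 > 0$ and $c_L = c_2 = 1 > 0$, while $c_1 = a_n$ is a Lucas number with $a_n \geq a_3 = 4 > 0$, so all coefficients are non-negative integers and \eqref{PLRSCond1} holds. For condition (2) I would re-index the sequence as $\{H_j\}_{j \geq 1}$ via $H_j = \mathcal{F}(j-1;n,m)$, so that \eqref{PLRSCond1} governs every index $j > L = 2$ and the sole remaining requirement from \eqref{PLRSCond2} is the normalization $H_1 = 1$. This is precisely where the nonstandard initial conditions $F_2 = 1,\ F_3 = 2$ are used: they force the leading term $\mathcal{F}(0;n,m)$ to equal $1$, exactly as in the direct check of the $2$-gap sequence carried out in Example \ref{2gapEx}.

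I expect the one genuine obstacle to be this final bookkeeping: correctly aligning the offset $m$ with the index shift so that the normalized leading term is $1$, and confirming that the coefficient $a_n$ arising from the Binet-formula computation of Lemma \ref{n-gap fib recurrence} is indeed the claimed Lucas number. Everything else is immediate from the recurrence. Once both conditions are in place the sequence is a PLRS by definition, and substituting this into the Generalized Zeckendorf Theorem for PLRS (Theorem \ref{genZeckPLRS}) at once yields the unique legal decomposition, and with it the digit bound $|b_k| \leq a_n$ recorded in Theorem \ref{n-gap PLRSState}.
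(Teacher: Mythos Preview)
Your approach is essentially the same as the paper's: both specialize the recurrence of Lemma~\ref{n-gap fib recurrence} to odd $n$ so that $(-1)^{n-1}=1$, read off $L=2$, $c_1=a_n$, $c_2=1$, and then verify the two PLRS conditions directly. Your write-up is in fact a bit cleaner about why $L=2$ reduces condition \eqref{PLRSCond2} to the single normalization $H_1=1$, and you correctly flag that the only residual work is aligning the offset $m$ so that the first term equals $1$ --- the paper handles this somewhat implicitly by working with $F_{nk+1}$ and invoking the shifted initial conditions.
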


\begin{proof} We consider each condition of Definition \ref{PLRSdef} individually:

\begin{enumerate}
 %   \item The first condition is always true; when $k=0$, $F_{nk+1}=F_1=1$, as desired.
    \item The first condition holds via the recurrence relation \eqref{ngaprec}.  Since $F_{nk+1}=a_n\cdot F_{n(k-1)+1}+(-1)^{n-1}\cdot F_{n(k-2)+1}$, we have the following relation for odd $n$:
\be \label{genDecompResultEq1}
F_{nk+1} \ = \ a_n\cdot F_{n(k-1)+1}+F_{n(k-2)+1}.
\ee
%Since our sequence is looking at $n$-gap Fibonacci numbers, we can rename the sequence as $\{G_m\}_{m=1}$.  This gives the following relation for odd $n$:
%$$G_{m}=a\cdot G_{m-1}+G_{m-2}$$
Since $a_n$ is a power of a positive number rounded to the nearest integer, we may satisfy the first condition of Definition \ref{PLRSdef} with the following parameters:
\be \label{genDecompResultEq2}
L = 2n,\ \ c_1 = a_n,\ \ c_2 = \cdots = c_{2n-1} = 0, \ \ c_{2n} = 1.
\ee
    \item Since $L=2$, we only need to check the condition \eqref{PLRSCond2} for $m=3$.  For this condition to hold, the following needs to be true for all such sequences of odd $n$:%$G_2=aG_1+1$
    \be \label{genDecompResultEq3}
    G_3 \ = \ a_n\cdot G_2+G_1+1.
    \ee
    We can rewrite \eqref{genDecompResultEq3} with Fibonacci numbers as follows:
    \begin{align} \label{genDecompResultEq4}
    F_{2n+1} \ &= \ a_n\cdot F_{n+1}+F_{1}+1 \Rightarrow \nonumber\\
    F_{2n+1}\ &= \ a_n\cdot F_{n+1}+2.
    \end{align}
    The existence of this representation of $G_3$ assures that the proof is complete.    %$$F_{n+1}=a_n\cdot F_1+1$$
\end{enumerate}
\end{proof}

Ultimately, this result links together two seemingly unrelated properties: the coefficients of certain PLRS and the necessary conditions for uniqueness of decompositions. We see that $a_n$ acts both as the coefficient of the first term of the $n$-gap Fibonacci recurrence and as the highest coefficient necessary for an integer decomposition using the terms generated by the recurrence.  The sign of the second term of the recurrence in turn determines whether the integer decompositions are unique, and where a positive term corresponds to uniqueness.  This naturally extends to linear combinations of $n$-gap Fibonaccis, i.e. the recurrences of the form
\be \label{linCombGener}
G_k \ = \ G_{k-1}+G_{k-2},
\ee
where $G_1,G_2\in \mathbb{Z}^+$.

%%%%%%%%%%%%%%%%%%%%%%%%%%%%%%%%%%%%%%%%%%%%%%%%%%%%%%%%%%%%%%%%%%%%%%%%%%%%%%%%%%%%%%%%%%%%%%%%%%%%%%%%%%%%%%%%%%%%%%%%%%%%%%%%%%%%%%%%%%%%%%%%%%%%%%
%%%%%%%%%%%%%%%%%%%%%%%%%%%%%%%%%%%%%%%%%%%%%%%%%%%%%%%%%%%%%%%%%%%%%%%%%%%%%%%%%%%%%%%%%%%%%%%%%%%%%%%%%%%%%%%%%%%%%%%%%%%%%%%%%%%%%%%%%%%%%%%%%%%%%%
%%%%%%%%%%%%%%%%%%%%%%%%%%%%%%%%%%%%%%%%%%%%%%%%%%%%%%%%%%%%%%%%%%%%%%%%%%%%%%%%%%%%%%%%%%%%%%%%%%%%%%%%%%%%%%%%%%%%%%%%%%%%%%%%%%%%%%%%%%%%%%%%%%%%%%
\section{Conclusion and Future Work}\label{conclusion}

Our method of looking specifically at $n$-gap Fibonacci sequences has lead us to several generalizations of Zeckendorf's theorem. We were able to connect these problems to the literature on PLRS by concluding that odd gap Fibonacci sequences are PLRS, and by utilizing results on the number of decompositions of natural numbers that exist using the elements of said sequences. The natural open problem to investigate is to determine whether these results can be extended to even integers $n \geq 4$.

Aside from the even integers case, there are also natural enumeration questions to consider. We could study the number of decompositions that arise if we remove the restriction placed by the recursive relationship, but still including the restriction on the number of copies of each summand. Alternatively, we could remove the restriction on the number of copies and investigate how to count the resulting decompositions.

Another possible direction is exploring different types of sequences beyond the Fibonnaci numbers, such as Skiponaccis ($S_k=S_{k-1}+S_{k-3}$), Tribonaccis ($T_k=T_{k-1}+T_{k-2}+T_{k-3}$), and so on, and seeing if we can find potential positive linear recursive sequences by changing the values of the coefficients. We could also explore trying to extend our work to cover sequences of the form $G_n = \alpha G_{n-1}+\beta G_{n-2}$, where $\alpha$ and $\beta$ are arbitrary integers.

%%%%%%%%%%%%%%%%%%%%%%%%%%%%%%%%%%%%%%%%%%%%%%%%%%%%%%%%%%%%%%%%%%%%%%%%%%%%%%%%%%%%%%%%%%%%%%%%%%%%%%%%%%%%%%%%%%%%%%%%%%%%%%%%%%%%%%%%%%%%%%%%%%%%%%
%%%%%%%%%%%%%%%%%%%%%%%%%%%%%%%%%%%%%%%%%%%%%%%%%%%%%%%%%%%%%%%%%%%%%%%%%%%%%%%%%%%%%%%%%%%%%%%%%%%%%%%%%%%%%%%%%%%%%%%%%%%%%%%%%%%%%%%%%%%%%%%%%%%%%%
%%%%%%%%%%%%%%%%%%%%%%%%%%%%%%%%%%%%%%%%%%%%%%%%%%%%%%%%%%%%%%%%%%%%%%%%%%%%%%%%%%%%%%%%%%%%%%%%%%%%%%%%%%%%%%%%%%%%%%%%%%%%%%%%%%%%%%%%%%%%%%%%%%%%%%

\appendix

\section{Alternative Proofs of Lemmas \ref{n-gap fib recurrence} and \ref{n-gap gen recurrence lemma}}\label{multisectionappen}

%modify exposition, add eqn #s, etc
One can use multisection techniques \cite{GL, Len2} for the generating functions to obtain alternative proofs of Lemmas \ref{n-gap fib recurrence} and \ref{n-gap gen recurrence lemma}. The steps are as follows.\\

The following generating function expansions for Fibonacci Numbers and Lucas Numbers are well-known.
\begin{eqnarray}\label{GFexpansion}
f(x,1,0)&=\sum_{k=0}^\infty F_{k} x^k=\frac{x}{1-x-x^2} \nonumber \\
l(x,1,0)&=\sum_{k=0}^\infty L_{k} x^k=\frac{2-x}{1-x-x^2}.
\end{eqnarray}

Then by invoking the multisection technique on \eqref{GFexpansion} we obtain

\begin{eqnarray}\label{GFexpansionmultisection}
f(x,n,m)&=\sum_{k=0}^\infty F_{kn+m} x^k=\frac{F_m+(F_{n+m}-F_mL_n)x}{1-L_nx+(-1)^n x^2} \nonumber \\
l(x,n,m)&=\sum_{k=0}^\infty L_{kn+m} x^k=\frac{L_m+(L_{n+m}-L_mL_n)x}{1-L_nx+(-1)^n x^2}.
\end{eqnarray}

For the special case $n=1$ and $m=0$ \cite{Len2}, we have that
\be \label{GFmulti_n=1_m=0}
f(x,n,0)=\frac{F_n x}{1-L_n x+(-1)^n x^2}  \text { and } l(x,n,0)=\frac{2-L_nx}{1-L_nx+(-1)^nx^2}.
\ee

The key realization is that the generating extend to negative integer values for the parameters $m$ and $n$.

\medskip

\end{document}